\numberwithin{equation}{section}
\newtheorem{theorem}[equation]{Theorem}
\newtheorem*{theorem*}{Theorem} \newtheorem{lemma}[equation]{Lemma}
\newtheorem*{conjecture*}{Mamma Conjecture}
\newtheorem*{conjecture1*}{Mamma Conjecture (revisited)}
\newtheorem{proposition}[equation]{Proposition}
\newtheorem{corollary}[equation]{Corollary}
\newtheorem*{corollary*}{Corollary}
\theoremstyle{remark}
\newtheorem{example}[equation]{Example}
\theoremstyle{remark}
\newtheorem{remark}[equation]{Remark}
\newcommand{\cA}{{\mathcal A}}
\newcommand{\cB}{{\mathcal B}}
\newcommand{\cC}{{\mathcal C}}
\newcommand{\cF}{{\mathcal F}}
\newcommand{\cO}{{\mathcal O}}
\newcommand{\cW}{{\mathcal W}}
\newcommand{\cX}{{\mathcal X}}
\newcommand{\cY}{{\mathcal Y}}
\newcommand{\cZ}{{\mathcal Z}}
\newcommand{\bbA}{\mathbb{A}}
\newcommand{\bbB}{\mathbb{B}}
\newcommand{\bbC}{\mathbb{C}}
\newcommand{\bbP}{\mathbb{P}}
\newcommand{\bbQ}{\mathbb{Q}}
\newcommand{\bbZ}{\mathbb{Z}}
\DeclareMathOperator{\id}{id}
\DeclareMathOperator{\NChow}{NChow} 
\newcommand{\dgcat}{\mathrm{dgcat}} 
\newcommand{\perf}{\mathrm{perf}}
\newcommand{\Chow}{\mathrm{Chow}}
\newcommand{\dg}{\mathrm{dg}}
\newcommand{\Hom}{\mathrm{Hom}}
\newcommand{\op}{\mathrm{op}}
\newcommand{\too}{\longrightarrow}
\newcommand{\ie}{\textsl{i.e.}\ }
\let\oldmarginpar\marginpar
\def\marginpar#1{\oldmarginpar{\tiny #1}}
\begin{document}

\title[A note on the Schur-finiteness of linear sections]{A note on the Schur-finiteness of linear sections}
\author{Gon{\c c}alo~Tabuada}

\address{Gon{\c c}alo Tabuada, Department of Mathematics, MIT, Cambridge, MA 02139, USA}
\email{tabuada@math.mit.edu}
\urladdr{http://math.mit.edu/~tabuada}
\thanks{The author was partially supported by a NSF CAREER Award}

\subjclass[2010]{14A22, 14C15, 14M12, 14M15, 18D20, 18E30}
\date{\today}
\abstract{Making use of the recent theory of noncommutative motives, we prove that Schur-finiteness in the setting of Voevodsky's mixed motives is invariant under homological projective duality. As an application, we show that the mixed motives of smooth linear sections of certain (Lagrangian) Grassmannians, spinor varieties, and determinantal varieties, are Schur-finite. Finally, we upgrade our applications from Schur-finiteness to Kimura-finiteness.}}

\maketitle
\vskip-\baselineskip
\vskip-\baselineskip


\section{Introduction and statement of results}
Let $(\cC,\otimes, {\bf 1})$ be a $\bbQ$-linear, idempotent complete, symmetric monoidal category. Given a partition $\lambda$ of an integer $n\geq 1$, consider the corresponding irreducible $\bbQ$-linear representation $V_\lambda$ of the symmetric group $\mathfrak{S}_n$ and the associated idempotent $e_\lambda\in \bbQ[\mathfrak{S}_n]$. Under these notations, the Schur-functor $S_\lambda\colon \cC \to \cC$ sends an object $a$ to the direct summand of $a^{\otimes n}$ determined by $e_\lambda$. Following Deligne \cite[\S1]{Deligne}, an object $a \in \cC$ is called {\em Schur-finite} if it is annihilated by some Schur-functor. Among other properties, Schur-finiteness is stable under direct sums, direct summands, tensor products, and distinguished triangles (consult Guletskii \cite{Guletskii} and Mazza \cite{Mazza}).

Voevodsky introduced in \cite{Voevodsky} a triangulated category of geometric mixed motives $\mathrm{DM}_{\mathrm{gm}}(k)_\bbQ$ (over a perfect base field $k$). By construction, this category is $\bbQ$-linear, idempotent complete, symmetric monoidal, and comes equipped with a symmetric monoidal functor $M(-)_\bbQ \colon \mathrm{Sm}(k) \to \mathrm{DM}_{\mathrm{gm}}(k)_\bbQ$, defined on smooth $k$-schemes. Conjecturally, all the objects of $\mathrm{DM}_{\mathrm{gm}}(k)_\bbQ$ are Schur-finite. Thanks to the work of Kimura \cite{Kimura}, K\"unneman \cite{Kunneman}, and Shermenev \cite{Shermenev}, the (mixed) motives $M(Z)_\bbQ$ of smooth projective $k$-schemes $Z$ of dimension $\leq 1$, and of abelian varieties, are Schur-finite. Besides these cases, this important conjecture remains wide open.

Now, let $X$ be a smooth projective $k$-scheme equipped with a line bundle $\cO_X(1)$; we write $X \to \bbP(V)$ for the associated map, where $V:=H^0(X,\cO_X(1))^\ast$. Given a linear subspace $L\subset V^\ast$, consider the linear section $X_L:=X\times_{\bbP(V)}\bbP(L^\perp)$. Our motivating question in this note is the following:

\vspace{0.1cm}

{\it Question: Is the mixed motive $M(X_L)_\bbQ$ Schur-finite?}

\vspace{0.1cm}
 
As proved by Ayoub in \cite[Prop.~5.7]{Ayoub}, a positive answer to the above question in the particular case where $X$ is the projective space, would imply that all the objects of the triangulated category $\mathrm{DM}_{\mathrm{gm}}(k)_\bbQ$ are Schur-finite. This justifies the importance of linear sections in the study of the Schur-finiteness of mixed motives.

Thanks to the theory of noncommutative motives (see \S\ref{sub:homological}) and to Kuznetsov's homological projective duality (=HPD), we are now able to answer affirmatively to the aforementioned question in several cases. Assume that the category of perfect complexes $\perf(X)$ admits a Lefschetz decomposition\footnote{When $\bbA_0=\bbA_1=\cdots = \bbA_{i-1}$, the Lefschetz decomposition is called {\em rectangular}.} $\langle \bbA_0, \bbA_1(1), \ldots, \bbA_{i-1}(i-1)\rangle$ with respect to $\cO_X(1)$ in the sense of \cite[Def.~4.1]{KuznetsovHPD}. Following \cite[Def.~6.1]{KuznetsovHPD}, let $Y$ be the HP-dual of $X$, $\cO_Y(1)$ the HP-dual line bundle, and $Y\to \bbP(V^\ast)$ the map associated to $\cO_Y(1)$. Given a generic linear subspace $L \subset V^\ast$, consider the linear sections $X_L$ and $Y_L:=Y \times_{\bbP(V^\ast)} \bbP(L)$. 
\begin{theorem}[HPD-invariance\footnote{Other HPD-invariance type results were established in \cite{Crelle,periods,CD}.}]\label{thm:HPD}
Let $X$ and $Y$ be as above. Assume that $X_L$ and $Y_L$ are smooth, that $\mathrm{dim}(X_L)=\mathrm{dim}(X) -\mathrm{dim}(L)$, that $\mathrm{dim}(Y_L)=\mathrm{dim}(Y)- \mathrm{dim}(L^\perp)$, and that the category $\bbA_0$ admits a full exceptional collection. Under these assumptions\footnote{Theorem \ref{thm:HPD} holds more generally when $Y$ is singular. In this case, we need to replace $Y$ by a noncommutative resolution of singularities $\perf(Y;\cF)$; consult \cite[\S2.4]{ICM-Kuznetsov} for details.}, $M(X_L)_\bbQ$ is Schur-finite if and only if $M(Y_L)_\bbQ$ is Schur-finite.
\end{theorem}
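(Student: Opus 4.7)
The plan is to leverage Kuznetsov's HPD theorem to obtain matching semi-orthogonal decompositions of $\perf(X_L)$ and $\perf(Y_L)$, and then transport them across to $\mathrm{DM}_{\mathrm{gm}}(k)_\bbQ$ via the bridge supplied by noncommutative motives (the content of \S\ref{sub:homological}). Specifically, Kuznetsov's main theorem gives semi-orthogonal decompositions
\[
\perf(X_L) = \langle \cC_L,\; \bbA_{\dim L}(1),\ldots, \bbA_{i-1}(i-1-\dim L)\rangle,
\]
\[
\perf(Y_L) = \langle \bbB_{j-1-\dim L^\perp}(\ldots), \ldots, \bbB_{\dim L^\perp}(-1),\; \cC_L\rangle,
\]
in which the ``primitive piece'' $\cC_L$ is the same on both sides, and each complementary factor is a subcategory of $\bbA_0$ (respectively of the analogously defined $\bbB_0$, which up to twist is again a subcategory of $\bbA_0$).

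The hypothesis that $\bbA_0$ admits a full exceptional collection then propagates to all of the complementary pieces $\bbA_j,\bbB_j$: being admissible subcategories of a category with a full exceptional collection, each inherits a full exceptional collection. This is the key numerical input: dg-categories with a full exceptional collection are, as noncommutative motives, direct sums of copies of the unit, and under the bridge functor of \S\ref{sub:homological} they correspond to direct sums of shifted Tate motives $\bbQ(n)[2n]$ in $\mathrm{DM}_{\mathrm{gm}}(k)_\bbQ$.

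Next, I would invoke the compatibility between semi-orthogonal decompositions on the noncommutative side and direct sum decompositions on the mixed-motivic side to write
\[
M(X_L)_\bbQ \oplus T_Y \;\simeq\; \widetilde{M(\cC_L)} \;\simeq\; M(Y_L)_\bbQ \oplus T_X
\]
in $\mathrm{DM}_{\mathrm{gm}}(k)_\bbQ$, where $T_X,T_Y$ are finite direct sums of Tate twists arising from the exceptional pieces (and where $\widetilde{M(\cC_L)}$ denotes the common summand coming from $\cC_L$). The dimension hypotheses $\mathrm{dim}(X_L)=\mathrm{dim}(X)-\mathrm{dim}(L)$ and $\mathrm{dim}(Y_L)=\mathrm{dim}(Y)-\mathrm{dim}(L^\perp)$ are what guarantee that $X_L$ and $Y_L$ are smooth of the expected dimension, so that $M(X_L)_\bbQ$ and $M(Y_L)_\bbQ$ are compact objects to which this bridge applies cleanly.

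Finally, I would combine two standard closure properties of Schur-finiteness (stability under direct sums and under direct summands, both recorded in \cite{Guletskii,Mazza}) with the Schur-finiteness of Tate motives. Since $T_X$ and $T_Y$ are Schur-finite (indeed Kimura-finite), the equivalence above implies that $M(X_L)_\bbQ$ is Schur-finite if and only if $\widetilde{M(\cC_L)}$ is, if and only if $M(Y_L)_\bbQ$ is. The principal obstacle is Step~3: one needs a sufficiently functorial bridge from noncommutative motives to $\mathrm{DM}_{\mathrm{gm}}(k)_\bbQ$ that converts Kuznetsov-style semi-orthogonal decompositions on the dg level into genuine direct sum decompositions of the $\bbQ$-linear Voevodsky motives; this is exactly what \S\ref{sub:homological} is set up to provide, and the verification that the HPD semi-orthogonal decompositions are compatible with this bridge is the technical heart of the proof.
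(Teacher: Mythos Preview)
Your overall strategy---use Kuznetsov's HPD decompositions, pass to noncommutative motives, and exploit closure of Schur-finiteness under sums and summands---matches the paper's. The genuine gap is in Step~3, and it is not just a technicality: the bridge of \S\ref{sub:homological} does \emph{not} go in the direction you assume. There is no $\bbQ$-linear symmetric monoidal functor from $\NChow(k)_\bbQ$ back to $\mathrm{DM}_{\mathrm{gm}}(k)_\bbQ$, and in particular there is no object ``$\widetilde{M(\cC_L)}$'' in $\mathrm{DM}_{\mathrm{gm}}(k)_\bbQ$ attached to the abstract dg category $\cC_L$. So the displayed isomorphism $M(X_L)_\bbQ \oplus T_Y \simeq \widetilde{M(\cC_L)} \simeq M(Y_L)_\bbQ \oplus T_X$ cannot be made sense of as stated.

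What the paper actually does is the reverse manoeuvre. The functor $U(-)_\bbQ$ goes from dg categories to $\NChow(k)_\bbQ$, and the comparison with the commutative world is via the composite $\Chow(k)_\bbQ \xrightarrow{\pi} \Chow(k)_\bbQ/_{-\otimes\bbQ(1)} \xrightarrow{\Phi} \NChow(k)_\bbQ$, where $\pi$ is merely \emph{faithful} and $\Phi$ is fully faithful. The key observation (Proposition~\ref{prop:aux} and Lemma~\ref{lem:aux}) is that a faithful $\bbQ$-linear symmetric monoidal functor \emph{detects} Schur-finiteness: $\mathfrak{h}(Z)_\bbQ$ is Schur-finite in $\Chow(k)_\bbQ$ if and only if $U(\perf_\dg(Z))_\bbQ$ is Schur-finite in $\NChow(k)_\bbQ$. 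One then runs the entire direct-sum argument inside $\NChow(k)_\bbQ$ (where semi-orthogonal decompositions genuinely split), obtaining that $U(\perf_\dg(X_L))_\bbQ$ is Schur-finite iff $U(\bbC_L^\dg)_\bbQ$ is, iff $U(\perf_\dg(Y_L))_\bbQ$ is. The passage from $\Chow(k)_\bbQ$ to $\mathrm{DM}_{\mathrm{gm}}(k)_\bbQ$ is handled separately via Voevodsky's fully faithful embedding $\Psi$ and duality. In short: do not try to lift the decomposition back to Voevodsky motives; instead, push Schur-finiteness forward and pull it back along a faithful functor.

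A smaller point: admissible subcategories of a category with a full exceptional collection need not themselves admit full exceptional collections. The paper avoids this by arguing only at the level of noncommutative Chow motives: since $U(\bbA_0^\dg)_\bbQ$ is a finite sum of copies of $U(k)_\bbQ$, any direct summand of it (such as $U(\bbA_r^\dg)_\bbQ$ or $U(\mathfrak{a}_r^\dg)_\bbQ$) is automatically Schur-finite, and the $\bbB_r$'s are handled via their decomposition \eqref{eq:decomp2} into the primitive pieces $\mathfrak{a}_s$.
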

Intuitively speaking, Theorem \ref{thm:HPD} shows that Schur-finiteness in the setting of Voevodsky's mixed motives is invariant under homological projective duality. As a consequence of this invariance, we obtain the following practical result:
\begin{corollary}\label{cor:HPD}
Let $X_L$ and $Y_L$ be as in Theorem \ref{thm:HPD}. If $\mathrm{dim}(Y_L)\leq 1$, then the (mixed) motive $M(X_L)_\bbQ$ is Schur-finite.
\end{corollary}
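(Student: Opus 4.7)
The plan is to reduce the statement to an already-known case via Theorem \ref{thm:HPD} and then invoke the classical Schur-finiteness results for curves recalled in the introduction.

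First, I would observe that Theorem \ref{thm:HPD} is directly applicable under the hypotheses of the corollary: we have the linear sections $X_L$ and $Y_L$, both smooth and of the expected dimension, with $\bbA_0$ admitting a full exceptional collection. Consequently, by that theorem, $M(X_L)_\bbQ$ is Schur-finite if and only if $M(Y_L)_\bbQ$ is Schur-finite. Thus the problem is reduced to establishing the Schur-finiteness of $M(Y_L)_\bbQ$.

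Next, I would note that $Y_L = Y\times_{\bbP(V^\ast)}\bbP(L)$ is by construction closed in the projective scheme $Y$, hence projective. Combined with the smoothness assumption and the dimension hypothesis $\mathrm{dim}(Y_L)\leq 1$, this makes $Y_L$ a smooth projective $k$-scheme of dimension at most $1$. By the results of Kimura \cite{Kimura} (and Shermenev \cite{Shermenev} for the abelian-variety aspect) recalled in the introduction, the mixed motive of any smooth projective $k$-scheme of dimension $\leq 1$ is Schur-finite (in fact Kimura-finite). Applying this to $Y_L$ yields the Schur-finiteness of $M(Y_L)_\bbQ$, and combining with the equivalence from the previous paragraph gives the Schur-finiteness of $M(X_L)_\bbQ$.

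There is essentially no obstacle here: the corollary is a direct application of Theorem \ref{thm:HPD} together with the classical curve case. The only minor point to verify is the projectivity of $Y_L$, which is automatic since it is cut out in the projective scheme $Y$ by the linear conditions defining $\bbP(L)\subset \bbP(V^\ast)$. All substantive work is packaged inside Theorem \ref{thm:HPD}, whose proof is where the noncommutative-motivic machinery and HPD enter.
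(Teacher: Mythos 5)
Your argument is correct and is exactly the one the paper intends: apply Theorem \ref{thm:HPD} to reduce to $Y_L$, which is smooth projective of dimension $\leq 1$, and then invoke the Kimura/Shermenev results recalled in the introduction to get Schur-finiteness (indeed Kimura-finiteness) of $M(Y_L)_\bbQ$. No further comment is needed.
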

In the next subsections we illustrate the strength of Corollary \ref{cor:HPD} in several examples (in all the cases below $k$ is algebraically closed and of characteristic zero).
\subsection*{Grassmannian $\mathrm{Gr}(2,5)$}
Let $X$ be the Grassmannian $\mathrm{Gr}(2,5)$ equipped with the Pl\"ucker embedding $\mathrm{Gr}(2,5) \to \bbP(\wedge^2W)$, where $W$ is a $5$-dimensional $k$-vector space. As explained in \cite[\S6.1]{Hyperplane}, the category $\perf(\mathrm{Gr}(2,5))$ admits a rectangular Lefschetz decomposition $\langle \bbA_0, \bbA_1(1),\ldots, \bbA_4(4)\rangle$ and $\bbA_0$ a full exceptional collection of length $2$. Moreover, the HP-dual $Y$ of $\mathrm{Gr}(2,5)$ is the dual Grassmannian $\mathrm{Gr}(2,W^\ast)$. Given a generic linear subspace $L\subset \wedge^2W^\ast$, consider the associated smooth linear sections $\mathrm{Gr}(2,5)_L$ and $\mathrm{Gr}(2,W^\ast)_L$. Making use of Corollary \ref{cor:HPD} and of the equalities $\mathrm{dim}(\mathrm{Gr}(2,5))=6$, $\mathrm{dim}(\mathrm{Gr}(2,5)_L)=6-\mathrm{dim}(L)$, $\mathrm{dim}(\mathrm{Gr}(2,W^\ast)_L)=\mathrm{dim}(L)-4$, we hence obtain the following result:
\begin{theorem}\label{thm:app1}
The (mixed) motive $M(\mathrm{Gr}(2,5)_L)_\bbQ$ of a smooth linear section of $\mathrm{Gr}(2,5)$ of arbitrary codimension is Schur-finite.
\end{theorem}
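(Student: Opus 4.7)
The plan is to deduce this directly from Corollary \ref{cor:HPD} applied to the HPD pair $X=\mathrm{Gr}(2,5)$ and $Y=\mathrm{Gr}(2,W^\ast)$ recalled in the paragraph preceding the statement. All hypotheses of Theorem \ref{thm:HPD} (and hence of Corollary \ref{cor:HPD}) are already in place: the rectangular Lefschetz decomposition $\langle \bbA_0, \bbA_1(1),\ldots, \bbA_4(4)\rangle$ of $\perf(\mathrm{Gr}(2,5))$ has $\bbA_0$ carrying a full exceptional collection of length $2$, and for a generic linear subspace $L\subset \wedge^2 W^\ast$ a standard Bertini-type argument on these homogeneous Grassmannians guarantees that the sections $\mathrm{Gr}(2,5)_L$ and $\mathrm{Gr}(2,W^\ast)_L$ are smooth and of the expected dimension. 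The only nontrivial step is then to translate the bound ``$\dim(Y_L)\leq 1$'' of Corollary \ref{cor:HPD} into a condition on the codimension $c:=\dim(L)$ using the equality $\dim(\mathrm{Gr}(2,W^\ast)_L)=\dim(L)-4$ recorded in the excerpt.

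I would then do a short case analysis in $c\in \{1,\ldots,6\}$, the only codimensions for which $\mathrm{Gr}(2,5)_L$ is nonempty (since $\dim \mathrm{Gr}(2,5)=6$). For $c\in\{1,2,3\}$ the HP-dual section $Y_L$ is empty, so $M(Y_L)_\bbQ=0$ is vacuously Schur-finite, and the HPD-invariance of Theorem \ref{thm:HPD} transfers Schur-finiteness back to $M(\mathrm{Gr}(2,5)_L)_\bbQ$. For $c=4$ the section $Y_L$ is $0$-dimensional, and for $c=5$ it is a smooth curve; in both cases $\dim(Y_L)\leq 1$, so Corollary \ref{cor:HPD} applies directly. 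Finally, for $c=6$ the section $\mathrm{Gr}(2,5)_L$ is itself $0$-dimensional, its mixed motive decomposes as a direct sum of copies of the motive of a point, and Schur-finiteness is immediate.

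The main (and really only) potential obstacle is the verification that the hypotheses of Theorem \ref{thm:HPD} hold in each codimension -- specifically the smoothness of $\mathrm{Gr}(2,5)_L$ and $\mathrm{Gr}(2,W^\ast)_L$ and the agreement of their dimensions with the expected values. Once this Bertini-style check is in hand, the theorem reduces to the one-line dimension count $\dim(Y_L)=\dim(L)-4\leq 1$ combined with the degenerate boundary case $c=6$. I do not expect any truly technical difficulty here, since Grassmannians are homogeneous and their linear sections have been extensively studied in the HPD literature for precisely this reason.
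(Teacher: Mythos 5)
Your argument is correct and is essentially the paper's own: Theorem \ref{thm:app1} is obtained by feeding the dimension counts $\dim(\mathrm{Gr}(2,5)_L)=6-\dim(L)$ and $\dim(\mathrm{Gr}(2,W^\ast)_L)=\dim(L)-4$ into Corollary \ref{cor:HPD}, exactly as you do. Your explicit case analysis (empty dual sections for codimension $\leq 3$, and the separate, trivial treatment of the zero-dimensional case $\dim(L)=6$, where $\dim(Y_L)=2$ falls outside the corollary's hypothesis) is if anything slightly more careful than the paper's one-paragraph derivation, but it is the same route.
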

\subsection*{Grassmannians $\mathrm{Gr}(2,6)$ and $\mathrm{Gr}(2,7)$}
Let $W$ be a $k$-vector space of dimension $6$, resp. $7$, and $X$ the Grassmannian $\mathrm{Gr}(2,6)$, resp. $\mathrm{Gr}(2,7)$, equipped with the Pl\"ucker embedding $\mathrm{Gr}(2,6) \to \bbP(\wedge^2W)$, resp. $\mathrm{Gr}(2,7) \to \bbP(\wedge^2W)$. As explained in \cite[\S10]{Lines}, the category $\perf(\mathrm{Gr}(2,6))$ admits a Lefschetz decomposition $\langle \bbA_0, \bbA_1(1),\ldots, \bbA_5(5)\rangle$, with $\bbA_0= \bbA_1= \bbA_2$ and $\bbA_3=\bbA_4 = \bbA_5$. Moreover, $\bbA_0$ and $\bbA_3$ admit full exceptional collections of length $3$ and $2$, respectively. In the same vein, as explained in \cite[\S11]{Lines}, the $\perf(\mathrm{Gr}(2,7))$ admits~a rectangular Lefschetz decomposition $\langle \bbA_0, \bbA_1(1), \ldots, \bbA_6(6)\rangle$ and $\bbA_0$ a full exceptional collection of length $3$. Furthermore, the HP-dual $Y$ of $\mathrm{Gr}(2,6)$, resp. $\mathrm{Gr}(2,7)$, is given by $\perf(\mathrm{Pf}(4,W^\ast);\cF)$, where $\mathrm{Pf}(4,W^\ast) \subset \bbP(\wedge^2W^\ast)$ is the (singular) Pfaffian variety and $\cF$ a certain coherent sheaf of algebras on $\mathrm{Pf}(4,W^\ast)$. The singular locus of $\mathrm{Pf}(4,W^\ast)$ is $8$-dimensional, resp. $10$-dimensional, and $\cF$ is Morita equivalent to the structure sheaf on the smooth locus. Therefore, given a generic linear subspace $L \subset \wedge^2 W^\ast$ of dimension $\leq 6$, resp. $\leq 10$, we can consider the associated smooth linear sections $\mathrm{Gr}(2,6)_L$ and $\mathrm{Pf}(4,W^\ast)_L$, resp. $\mathrm{Gr}(2,7)_L$ and~$\mathrm{Pf}(4,W^\ast)_L$. Making use of Corollary \ref{cor:HPD} and of the equalities $\mathrm{dim}(\mathrm{Gr}(2,6))=8$, $\mathrm{dim}(\mathrm{Gr}(2,6)_L)=8-\mathrm{dim}(L)$, $\mathrm{dim}(\mathrm{Pf}(4,W^\ast)_L)=\mathrm{dim}(L)-2$, resp. $\mathrm{dim}(\mathrm{Gr}(2,7))=10$, $\mathrm{dim}(\mathrm{Gr}(2,7)_L)=~\!10-\mathrm{dim}(L)$, $\mathrm{dim}(\mathrm{Pf}(4,W^\ast)_L)=\mathrm{dim}(L)-4$, we hence obtain the following result:
\begin{theorem}\label{thm:app2}
\begin{itemize}
\item[(i)] The (mixed) motive $M(\mathrm{Gr}(2,6)_L)_\bbQ$ of a smooth linear section of $\mathrm{Gr}(2,6)$ of codimension $1$, $2$, or $3$, is Schur-finite.
\item[(ii)] The (mixed) motive $M(\mathrm{Gr}(2,7)_L)_\bbQ$ of a smooth linear section of $\mathrm{Gr}(2,7)$ of codimension $1$, $2$, $3$, $4$, or $5$, is Schur-finite.
\end{itemize}
\end{theorem}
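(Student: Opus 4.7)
The plan is to deduce both items directly from Corollary \ref{cor:HPD}, applied to the homological projective duality data recalled in the paragraph preceding the theorem. Since that corollary converts the desired Schur-finiteness of $M(X_L)_\bbQ$ into the single inequality $\dim(Y_L)\leq 1$ for the HP-dual linear section, the body of the argument is an arithmetic check that the dimension formulas quoted from Kuznetsov's work place $Y_L$ in that range for precisely the codimensions listed.

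For item (i), I combine the equality $\dim(\mathrm{Pf}(4,W^\ast)_L)=\dim(L)-2$ with the observation that the codimension of $\mathrm{Gr}(2,6)_L$ in $\mathrm{Gr}(2,6)$ equals $\dim(L)$. The codimensions $1$, $2$, $3$ of the primal section then correspond to dual sections of dimensions $-1$, $0$, $1$, all $\leq 1$. For item (ii), the analogous computation using $\dim(\mathrm{Pf}(4,W^\ast)_L)=\dim(L)-4$ shows that the codimensions $1,2,3,4,5$ correspond to dual dimensions $-3,-2,-1,0,1$, again all $\leq 1$. Corollary \ref{cor:HPD} then yields immediately that $M(\mathrm{Gr}(2,n)_L)_\bbQ$ is Schur-finite in every case claimed.

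The one point I expect to require a small but nontrivial justification, and which I would flag as the main obstacle, is that the HP-dual of $\mathrm{Gr}(2,n)$ is only a \emph{noncommutative} resolution $\perf(\mathrm{Pf}(4,W^\ast);\cF)$ of a singular Pfaffian variety. The footnote attached to Theorem \ref{thm:HPD} licenses this case in principle; what remains is to observe that for generic $L$ of the small dimensions considered, $\bbP(L)$ is disjoint from the singular locus of $\mathrm{Pf}(4,W^\ast)$, by a standard dimension count in $\bbP^{14}$ (resp.\ $\bbP^{20}$). Consequently $\cF|_{\mathrm{Pf}(4,W^\ast)_L}$ is Morita equivalent to the structure sheaf, so $Y_L$ may be treated as the smooth commutative variety $\mathrm{Pf}(4,W^\ast)_L$, and Corollary \ref{cor:HPD} applies verbatim.
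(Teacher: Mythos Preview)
Your proposal is correct and follows essentially the same approach as the paper: the paper's proof is precisely the application of Corollary~\ref{cor:HPD} to the dimension formulas $\dim(\mathrm{Pf}(4,W^\ast)_L)=\dim(L)-2$ (resp.\ $\dim(L)-4$) quoted in the paragraph preceding the theorem. Your additional remark about avoiding the singular locus of the Pfaffian for generic $L$ of small dimension is also exactly the justification the paper gives (the $8$- and $10$-dimensional singular loci and the Morita triviality of $\cF$ on the smooth locus) for being able to treat $Y_L$ as an honest smooth variety.
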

\subsection*{Lagrangian Grassmannian $\mathrm{LGr}(3,6)$}
Let $W$ be a $6$-dimensional $k$-vector space, equipped with a symplectic form $\omega$, and $X$ the associated Lagrangian Grassmannian $\mathrm{LGr}(3,6):=\mathrm{LGr}(3,W)$. The Pl\"ucker embedding $\mathrm{Gr}(3,W) \subset \bbP(\wedge^3 W)$ restricts to an embedding $\mathrm{LGr}(3,6) \to \bbP(V)$ into a $13$-dimensional projective space; see \cite[\S6.3]{Hyperplane}. The classical projective dual variety $\mathrm{LGr}(3,6)^\vee \subset \bbP(V^\ast)$ is a quartic hypersurface which is singular along a $9$-dimensional subvariety $Z$. As explained in {\em loc. cit.}, the category $\perf(\mathrm{LGr}(3,6))$ admits a rectangular Lefschetz decomposition $\langle \bbA_0, \bbA_1(1), \cdots, \bbA_3(3)\rangle$ and $\bbA_0$ a full exceptional collection of length $2$.  Moreover, the HP-dual $Y$ of $\mathrm{LGr}(3,6)$ is given by $\perf(\mathrm{LGr}(3,6)^\vee\backslash Z; \cF)$, where $\cF$ is a certain sheaf of Azumaya algebras on $\mathrm{LGr}(3,6)^\vee\backslash Z$. Given a generic linear subspace $L \subset V^\ast$ such that $\bbP(L)\cap Z=\emptyset$, consider the associated smooth linear sections $\mathrm{LGr}(3,6)_L$ and $(\mathrm{LGr}(3,6)^\vee\backslash Z)_L$. Making use of Corollary \ref{cor:HPD}, of the equalities $\mathrm{dim}(\mathrm{LGr}(3,6))=6$, $\mathrm{dim}(\mathrm{LGr}(3,6)_L)=6-\mathrm{dim}(L)$, $\mathrm{dim}((\mathrm{LGr}(3,6)^\vee \backslash Z)_L)=\mathrm{dim}(L)-2$, and of the fact that the Brauer group of a smooth curve is trivial, we hence obtain the following result:
\begin{theorem}\label{thm:app4}
The (mixed) motive $M(\mathrm{LGr}(3,6)_L)_\bbQ$ of a smooth linear section of $\mathrm{LGr}(3,6)$ of codimension $1$, $2$, or $3$, is Schur-finite.
\end{theorem}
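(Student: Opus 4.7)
The plan is to apply Corollary \ref{cor:HPD} directly, invoking the twisted version mentioned in the footnote to Theorem \ref{thm:HPD} since the HP-dual here is noncommutative. Specifically, I take $X=\mathrm{LGr}(3,6)$ and the noncommutative HP-dual $Y=\perf(\mathrm{LGr}(3,6)^\vee \backslash Z;\cF)$ recalled in the paragraphs preceding the theorem. The preceding discussion already verifies the key input: $\perf(\mathrm{LGr}(3,6))$ carries a rectangular Lefschetz decomposition whose first piece $\bbA_0$ admits a full exceptional collection of length $2$, so the hypotheses of Theorem \ref{thm:HPD} are met for generic $L\subset V^\ast$ with $\bbP(L)\cap Z=\emptyset$, producing smooth linear sections of the expected dimensions on both sides.

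Next, I translate the codimension bound into the dimensional hypothesis of Corollary \ref{cor:HPD}. Using $\mathrm{dim}(\mathrm{LGr}(3,6))=6$ and $\mathrm{dim}((\mathrm{LGr}(3,6)^\vee\backslash Z)_L)=\mathrm{dim}(L)-2$, the cases where $\mathrm{LGr}(3,6)_L$ has codimension $1$, $2$, $3$ correspond to $\mathrm{dim}(L)=1,2,3$, so $\mathrm{dim}(Y_L)\in\{-1,0,1\}$. In all three situations we have $\mathrm{dim}(Y_L)\leq 1$, placing us within the range of Corollary \ref{cor:HPD}.

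The remaining point is the twist by $\cF$. The restriction $\cF|_{Y_L}$ is a sheaf of Azumaya algebras on a smooth scheme of dimension at most $1$ over an algebraically closed field of characteristic zero. The Brauer group of such a scheme (a disjoint union of points, or a smooth curve) is trivial, so $\cF|_{Y_L}$ is Morita equivalent to the structure sheaf; consequently the noncommutative motive of $\perf(Y_L;\cF|_{Y_L})$ coincides with that of the commutative smooth scheme $(\mathrm{LGr}(3,6)^\vee\backslash Z)_L$, whose mixed motive is Schur-finite by Kimura's theorem for curves (and trivially for $0$-dimensional schemes) cited in the introduction. Applying Corollary \ref{cor:HPD} then yields Schur-finiteness of $M(\mathrm{LGr}(3,6)_L)_\bbQ$. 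The main obstacle is not computational but conceptual: one must trust that Corollary \ref{cor:HPD} genuinely admits the twisted formulation asserted in its footnote, and that the Brauer-triviality on smooth $\leq 1$-dimensional schemes is what allows the Schur-finiteness of the twisted noncommutative motive to reduce to the classical Kimura-finiteness of a smooth curve.
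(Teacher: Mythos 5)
Your proposal is correct and follows essentially the same route as the paper: apply Corollary \ref{cor:HPD} with the noncommutative HP-dual $\perf(\mathrm{LGr}(3,6)^\vee\backslash Z;\cF)$, note that codimension $1$, $2$, $3$ forces $\mathrm{dim}(Y_L)=\mathrm{dim}(L)-2\leq 1$, and use triviality of the Brauer group of a smooth curve (and of points) over an algebraically closed field to untwist $\cF$ on the dual section. This matches the paper's argument, which likewise invokes the footnoted noncommutative-resolution variant of Theorem \ref{thm:HPD} together with the same dimension count and Brauer-group observation.
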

\subsection*{Spinor variety $\mathrm{Sp_+}(5,10)$}
Let $W$ be a $10$-dimensional $k$-vector space and $q \in S^2W^\ast$ a nondegenerate quadratic form. The associated isotropic Grassmannian of $5$-dimensional subspaces in $W$ has two (isomorphic) connected components $X:=\mathrm{Sp}_+(5,10)\subset \bbP(\wedge^5 W)$ and $Y:=\mathrm{Sp}_-(5,10)\subset \bbP(\wedge^5 W^\ast)$ called the {\em Spinor varieties}. As explained in \cite[\S6.2]{Hyperplane}, the category $\perf(\mathrm{Sp}_+(5,10))$ admits a rectangular Lefschetz decomposition $\langle \bbA_0, \bbA_1(1), \ldots, \bbA_7(7)\rangle$ and $\bbA_0$ a full exceptional collection of length $2$. Moreover, the spinor varieties $\mathrm{Sp}_+(5,10)$ and $\mathrm{Sp}_-(5,10)$ are HP-dual to each other. Given a generic linear subspace $L\subset \wedge^5 W^\ast$, consider the associated smooth linear sections $\mathrm{Sp}_+(5,10)_L$ and $\mathrm{Sp}_-(5,10)_L$. Making use of Corollary \ref{cor:HPD} and of the equalities $\mathrm{dim}(\mathrm{Sp}_+(5,10))=10$, $\mathrm{dim}(\mathrm{Sp}_+(5,10)_L)=10-\mathrm{dim}(L)$, $\mathrm{dim}(\mathrm{Sp}_-(5,10)_L)=\mathrm{dim}(L)-6$, we hence obtain the following result:
\begin{theorem}\label{thm:app5}
The (mixed) motive $M(\mathrm{Sp}_+(5,10)_L)_\bbQ$ of a smooth linear section of $\mathrm{Sp}_+(5,10)$ of codimension $1$, $2$, $3$, $4$, $5$, $6$, or $7$, is Schur-finite.
\end{theorem}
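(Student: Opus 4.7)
The plan is to apply Corollary \ref{cor:HPD} directly to the pair $(X,Y) = (\mathrm{Sp}_+(5,10), \mathrm{Sp}_-(5,10))$. All the HPD-input required by Theorem \ref{thm:HPD} — namely, the rectangular Lefschetz decomposition $\langle \bbA_0, \bbA_1(1), \ldots, \bbA_7(7)\rangle$ of $\perf(\mathrm{Sp}_+(5,10))$, the full exceptional collection of length $2$ on $\bbA_0$, and the fact that $\mathrm{Sp}_\pm(5,10)$ are HP-dual — is recorded in the paragraph preceding the statement (and ultimately comes from \cite[\S6.2]{Hyperplane}). Note that here the HP-dual is a genuine smooth projective variety, so, in contrast with the treatments of $\mathrm{Gr}(2,6)$, $\mathrm{Gr}(2,7)$, and $\mathrm{LGr}(3,6)$, no noncommutative resolution or Azumaya algebra bookkeeping is needed.

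Next, I would translate the codimension hypothesis into a dimension bound on the dual section. Since a codimension $c$ linear section $\mathrm{Sp}_+(5,10)_L$ corresponds to $\mathrm{dim}(L) = c$, the dimension formula $\mathrm{dim}(\mathrm{Sp}_-(5,10)_L) = \mathrm{dim}(L) - 6$ recorded above gives $\mathrm{dim}(\mathrm{Sp}_-(5,10)_L) = c-6$. Therefore, for $c \in \{1,2,3,4,5,6,7\}$, one has $\mathrm{dim}(\mathrm{Sp}_-(5,10)_L) \leq 1$, which is exactly the hypothesis of Corollary \ref{cor:HPD}. For generic $L$, both $\mathrm{Sp}_+(5,10)_L$ and (when non-empty) $\mathrm{Sp}_-(5,10)_L$ are smooth of the expected dimension; when $c<6$ the dual section $\mathrm{Sp}_-(5,10)_L$ is empty and its motive is zero (hence trivially Schur-finite), when $c = 6$ it is zero-dimensional, and when $c = 7$ it is a smooth projective curve.

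With these two pieces in place, the conclusion is immediate: Corollary \ref{cor:HPD} transfers Schur-finiteness of $M(\mathrm{Sp}_-(5,10)_L)_\bbQ$ — known in all three of the sub-cases above either tautologically, as a sum of Tate motives, or by Kimura's theorem \cite{Kimura} for curves — to Schur-finiteness of $M(\mathrm{Sp}_+(5,10)_L)_\bbQ$, as desired.

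The \emph{main obstacle} is really not in this proof, which is a direct numerical check against Corollary \ref{cor:HPD}; rather, all of the genuine work is concentrated in Theorem \ref{thm:HPD}, whose validity for spinor varieties depends on Kuznetsov's HPD-data in \cite[\S6.2]{Hyperplane}. The only bookkeeping subtlety to watch for is that genericity of $L$ is used both for the smoothness/expected-dimension conclusion on the two sections and, implicitly, for ensuring that the emptiness of $\mathrm{Sp}_-(5,10)_L$ in the low-codimension range is indeed achieved; both are standard for linear sections of smooth projective varieties.
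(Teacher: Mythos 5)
Your proof is correct and follows exactly the paper's route: the paragraph preceding Theorem \ref{thm:app5} applies Corollary \ref{cor:HPD} with the HPD-data of \cite[\S6.2]{Hyperplane} and the same dimension count $\mathrm{dim}(\mathrm{Sp}_-(5,10)_L)=\mathrm{dim}(L)-6\leq 1$ for codimension $1,\ldots,7$. Your explicit case split (empty, zero-dimensional, curve) is just an unpacking of the hypothesis $\mathrm{dim}(Y_L)\leq 1$ of the corollary and matches the paper's implicit treatment.
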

\subsection*{Determinantal varieties}
Let $U$ and $V$ be two $k$-vector spaces of dimensions $m$ and $n$, respectively, with $m\leq n$, $W$ the tensor product $U \otimes V$, and $0<r <m$ an integer. Consider the determinantal variety $\mathcal{Z}^r_{m,n} \subset \bbP(W)$, resp. $\cW^r_{m,n}\subset \bbP(W^\ast)$, defined as the locus of those matrices $V \to U^\ast$, resp. $V^\ast \to U$, with rank at most $r$, resp. with corank at least $r$. For example, $\cZ^1_{m,n}$ are the classical Segre varieties. As explained by Bernardara, Bolognesi, and Faenzi in \cite[\S3]{BBF}, $\mathcal{Z}^r_{m,n}$ and $\cW^r_{m,n}$ admit (Springer) resolutions of singularities $X:=\cX^r_{m,n}$ and $Y:=\cY^r_{m,n}$, respectively. Moreover, the category $\perf(\cX^r_{m,n})$ admits a rectangular Lefschetz decomposition $\langle \bbA_0, \bbA_1(1), \ldots, \bbA_{nr-1}\rangle$ and $\bbA_0$ a full exceptional collection of length $\binom{m}{r}$. Furthermore, the resolutions $\cX^r_{m,n}$ and $\cY^r_{m,n}$ are HP-dual to each other. Given a generic linear subspace $L\subset W^\ast$, consider the associated smooth linear sections $(\cX^r_{m,n})_L$ and $(\cY^r_{m,n})_L$. Making use of Corollary \ref{cor:HPD} and of the equalities $\mathrm{dim}(\cX^r_{m,n})=r(n+m-r)-1$, $\mathrm{dim}((\cX^r_{m,n})_L)=r(n+m-r)-1-\mathrm{dim}(L)$, $\mathrm{dim}((\cY^r_{m,n})_L)=r(m-n-r)-1+\mathrm{dim}(L)$, we hence obtain the following result:
\begin{theorem}\label{thm:app6}
If $r(m-n-r)-1+\mathrm{dim}(L)\leq 1$, then the (mixed) motive $M((\cX^r_{m,n})_L)_\bbQ$ of a linear section of $\cX^r_{m,n}$ of codimension $\mathrm{dim}(L)$ is Schur-finite.
\end{theorem}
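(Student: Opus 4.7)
The plan is to apply Corollary \ref{cor:HPD} directly to the homological projective duality between the Springer resolutions $\cX^r_{m,n}$ and $\cY^r_{m,n}$ of the determinantal varieties $\cZ^r_{m,n}$ and $\cW^r_{m,n}$, as established by Bernardara--Bolognesi--Faenzi in \cite{BBF} and summarized in the paragraph preceding the theorem.

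Concretely, I would first collect from \cite{BBF} the data required to feed into Theorem \ref{thm:HPD}: the smooth projective varieties $\cX^r_{m,n}$ and $\cY^r_{m,n}$ are HP-dual; the category $\perf(\cX^r_{m,n})$ admits a rectangular Lefschetz decomposition $\langle \bbA_0, \bbA_1(1), \ldots, \bbA_{nr-1}(nr-1)\rangle$ with respect to the natural line bundle; and $\bbA_0$ admits a full exceptional collection of length $\binom{m}{r}$, so the exceptional-collection hypothesis of Theorem \ref{thm:HPD} is automatically satisfied. Since both HP-duals are honest smooth projective varieties, the noncommutative resolution variant of the footnote to Theorem \ref{thm:HPD} is not required here.

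Next, for a generic linear subspace $L \subset W^\ast$, a standard Bertini-type argument on both sides guarantees that the linear sections $(\cX^r_{m,n})_L$ and $(\cY^r_{m,n})_L$ are smooth with the expected dimensions
\[
\dim((\cX^r_{m,n})_L) = r(n+m-r) - 1 - \dim(L), \qquad \dim((\cY^r_{m,n})_L) = r(m-n-r) - 1 + \dim(L),
\]
as already recorded immediately above the theorem. This verifies the remaining hypotheses of Corollary \ref{cor:HPD}.

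Finally, the assumption $r(m-n-r) - 1 + \dim(L) \leq 1$ is literally the inequality $\dim((\cY^r_{m,n})_L) \leq 1$, and hence Corollary \ref{cor:HPD} applies and delivers the Schur-finiteness of $M((\cX^r_{m,n})_L)_\bbQ$. The argument is entirely parallel to the four preceding examples (the Grassmannians $\mathrm{Gr}(2,5)$, $\mathrm{Gr}(2,6)$, $\mathrm{Gr}(2,7)$, the Lagrangian Grassmannian $\mathrm{LGr}(3,6)$, and the spinor variety $\mathrm{Sp}_+(5,10)$); the only substantive point is the extraction of the HPD input from \cite{BBF}, and I do not anticipate any genuine obstacle.
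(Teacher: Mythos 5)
Your proposal is correct and follows essentially the same route as the paper: one feeds the Bernardara--Bolognesi--Faenzi HPD data for the Springer resolutions $\cX^r_{m,n}$ and $\cY^r_{m,n}$ (rectangular Lefschetz decomposition with $\bbA_0$ generated by an exceptional collection of length $\binom{m}{r}$, generic $L$ giving smooth sections of the expected dimensions) into Corollary \ref{cor:HPD}, and the hypothesis $r(m-n-r)-1+\dim(L)\leq 1$ is exactly the condition $\dim((\cY^r_{m,n})_L)\leq 1$. No substantive difference from the paper's argument.
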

Since $0 < r<m\leq n$, the inequality of Theorem \ref{thm:app6} holds whenever the dimension of $L$ is equal to $1$, $2$, or $3$.  This leads to the following unconditional result:
\begin{corollary}\label{cor:unconditional}
The (mixed) motive $M((\cX^r_{m,n})_L)_\bbQ$ of a smooth linear section of $\cX^r_{m,n}$ of codimension $1$, $2$, or $3$, is Schur-finite.
\end{corollary}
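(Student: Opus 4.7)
The plan is to deduce Corollary \ref{cor:unconditional} as a direct arithmetic consequence of Theorem \ref{thm:app6}. That theorem already furnishes Schur-finiteness of $M((\cX^r_{m,n})_L)_\bbQ$ under the single numerical hypothesis $r(m-n-r)-1+\dim(L)\leq 1$, so all that remains is to verify that this inequality is automatic whenever $\dim(L) \in \{1,2,3\}$, given the standing constraints $0 < r < m \leq n$ from the setup of the determinantal subsection.

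The key step is a short estimate on $r(m-n-r)$. Since $m \leq n$ we have $m - n \leq 0$, and since $r \geq 1$ we have $-r \leq -1$, so
\[
r(m-n-r) \;=\; r(m-n) - r^2 \;\leq\; -r^2 \;\leq\; -1.
\]
Therefore $r(m-n-r) - 1 \leq -2$, and adding $\dim(L) \leq 3$ yields
\[
r(m-n-r) - 1 + \dim(L) \;\leq\; -2 + 3 \;=\; 1,
\]
which is precisely the hypothesis of Theorem \ref{thm:app6}. Invoking that theorem concludes the proof.

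There is no genuine obstacle here; the argument is a routine substitution. The only thing worth double-checking is the strictness of the inequalities (namely that $0 < r$ and $r < m \leq n$ together really force $r(m-n-r) \leq -1$, which they do since both factors of size at least $1$ appear with the correct signs). Once this is confirmed, the corollary follows immediately from Theorem \ref{thm:app6}.
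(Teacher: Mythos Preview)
Your proof is correct and matches the paper's approach exactly: the paper simply observes that, since $0<r<m\leq n$, the inequality of Theorem~\ref{thm:app6} is satisfied for $\dim(L)\in\{1,2,3\}$, and your argument just fills in the elementary arithmetic behind that observation.
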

The dimension of $\cX^r_{m,n}$, \ie the integer $r(n+m-r)-1$, can be arbitrary high. Consequently, Corollary \ref{cor:unconditional} furnish us infinitely many examples of smooth projective $k$-schemes, of arbitrary high dimension, whose (mixed) motives are Schur-finite.
\begin{remark}\label{rk:Folds}
\begin{itemize}
\item[(i)] To the best of the author's knowledge, the above Theorems \ref{thm:app1}-\ref{thm:app6} (and Corollary \ref{cor:unconditional}) are new in the literature. They provide us several new examples of Schur-finite (mixed) motives.
\item[(ii)]  As proved by Gorchinskiy and Guletskii in \cite[\S5]{Threefolds}, the (mixed) motives of Fano threefolds are Schur-finite. In the particular case of codimension $3$ at Theorems \ref{thm:app1} and \ref{thm:app4}, and of codimension $7$ at Theorem \ref{thm:app5}, the corresponding smooth linear sections $X_L$ are Fano threefolds. We hence obtain, in these particular cases, an alternative proof of Schur-finiteness.
\end{itemize}
\end{remark}
\subsection*{Kimura-finiteness}
Let $(\cC,\otimes, {\bf 1})$ be a $\bbQ$-linear, idempotent complete, symmetric monoidal category. In the case of the partition $\lambda=(1,\ldots, 1)$, resp. $\lambda=(n)$, the associated Schur-functor $\wedge^n:=S_{(1,\ldots, 1)}$, resp. $\mathrm{Sym}^n:=S_{(n)}$, is called the {\em $n^{\mathrm{th}}$ wedge product}, resp. the {\em $n^{\mathrm{th}}$ symmetric product}. Following Kimura \cite{Kimura}, an object $a \in \cC$ is called {\em even-dimensional}, resp. {\em odd-dimensional}, if $\wedge^n(a)$, resp. $\mathrm{Sym}^n(a)=0$, for some $n \gg 0$. The biggest integer $\mathrm{kim}_+(a)$, resp. $\mathrm{kim}_-(a)$, for which $\wedge^{\mathrm{kim}_+(a)}\neq 0$, resp. $\mathrm{Sym}^{\mathrm{kim}_-(a)}(a)\neq 0$, is called the {\em even}, resp. {\em odd}, {\em Kimura-dimension of $a$}. An object $a \in \cC$ is called {\em Kimura-finite} if $a\simeq a_+\oplus a_-$, with $a_+$ even-dimensional and $a_-$ odd-dimensional. The integer $\mathrm{kim}(a)=\mathrm{kim}_+(a_+)+\mathrm{kim}_-(a_-)$ is called the {\em Kimura-dimension of $a$}. Finally, Kimura-finiteness implies Schur-finiteness.

The notion of Kimura-finiteness has been extensively studied in the motivic setting; consult the survey \cite{Andre-survey}. For example, Kimura proved in \cite[\S4]{Kimura} that the (mixed) motives $M(Z)_\bbQ$ of smooth projective $k$-schemes $Z$ of dimension $\leq 1$ are Kimura-finite. Moreover, we have the following computations
$$
\mathrm{kim}_+(M(Z)_{\bbQ,+})= \begin{cases}
\mathrm{length}(Z) & \mathrm{if}\,\,\mathrm{dim}(Z)=0\\
2&\mathrm{if}\,\,\mathrm{dim}(Z)=1
\end{cases}
$$
$$
\mathrm{kim}_-(M(Z)_{\bbQ,-})= \begin{cases}
0 & \mathrm{if}\,\,\mathrm{dim}(Z)=0\\
2g&\mathrm{if}\,\,\mathrm{dim}(Z)=1\,,
\end{cases}
$$
where $g$ stands for the genus of the smooth projective curve $Z$; when $Z=\emptyset$, we have $\mathrm{kim}(M(Z)_\bbQ)=0$. As another example, Guletskii and Pedrini proved in \cite[\S4]{GP} that the (mixed) motive $M(Z)_\bbQ$ of a smooth projective surface $Z$, with $p_g(Z)=0$, is Kimura-finite if and only if Bloch's conjecture on the Albanese kernel for $Z$ holds. It also should be pointed out that, in contrast with Schur-finiteness, it is known that not every mixed motive\footnote{Nevertheless, it is conjectured that every Chow motive is Kimura-finite; see \cite[Conj.~2.7]{Andre}.} is Kimura-finite!; consult \cite[\S5.1]{Mazza} for a counter-example.
\begin{theorem}\label{thm:2}
Theorems \ref{thm:app1}-\ref{thm:app6} (and Corollary \ref{cor:unconditional}) hold {\em mutatis mutandis} with Schur-finiteness replaced by Kimura-finitess. Moreover, we have the equalities
\begin{eqnarray}
\mathrm{kim}_+(M(X_L)_{\bbQ,+}) & = & \mathrm{kim}_+(M(Y_L)_{\bbQ,+}) + (l_{\mathrm{dim}(L)}+ \cdots + l_{i-1}) \label{eq:equality}\\
\mathrm{kim}_-(M(X_L)_{\bbQ,-}) & = & \mathrm{kim}_-(M(Y_L)_{\bbQ,-}) \label{eq:equality-new} \,,
\end{eqnarray}
where $l_r$ stands for the length of the full exceptional collection of the category $\bbA_r$. 
\end{theorem}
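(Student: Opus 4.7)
The plan is to revisit the proof of Theorem \ref{thm:HPD} and promote it from Schur-finiteness to Kimura-finiteness, while also reading off the refined numerical invariants. That proof produces --- via the HPD semi-orthogonal decompositions of $\perf(X_L)$ and $\perf(Y_L)$ combined with the noncommutative-to-commutative motives comparison of \S\ref{sub:homological} --- a direct sum decomposition in $\mathrm{DM}_{\mathrm{gm}}(k)_\bbQ$ of the form
\[
M(X_L)_\bbQ \;\simeq\; M(Y_L)_\bbQ \,\oplus\, \bigoplus_{r=\mathrm{dim}(L)}^{i-1} \bbQ(r)[2r]^{\oplus l_r},
\]
where each Tate summand corresponds to one object of the full exceptional collection of the Lefschetz component $\bbA_r$ that survives in $\perf(X_L)$ but is absent from $\perf(Y_L)$ (under the hypothesis $\mathrm{dim}(Y_L)\leq 1$ of Corollary \ref{cor:HPD}, the dual Lefschetz filtration contributes no extra pieces on the $Y_L$ side).

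First, I would verify that each Tate motive $\bbQ(r)[2r]$ is Kimura-finite: as a $\otimes$-invertible object of ``rank $1$'', its second exterior power already vanishes, so it is even-dimensional with $\mathrm{kim}_+=1$ and $\mathrm{kim}_-=0$. Next, in all the cases at hand (Theorems \ref{thm:app1}--\ref{thm:app6} and Corollary \ref{cor:unconditional}) we have $\mathrm{dim}(Y_L)\leq 1$, so $M(Y_L)_\bbQ$ is Kimura-finite by Kimura's foundational result \cite[\S4]{Kimura}. Since Kimura-finiteness is stable under finite direct sums and direct summands, the displayed decomposition yields that $M(X_L)_\bbQ$ is Kimura-finite.

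The numerical equalities \eqref{eq:equality} and \eqref{eq:equality-new} then follow at once from the additivity of the even and odd Kimura-dimensions under direct sums of Kimura-finite objects: the extra Tate summands contribute exactly $l_{\mathrm{dim}(L)}+\cdots+l_{i-1}$ to $\mathrm{kim}_+$ and $0$ to $\mathrm{kim}_-$.

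The main obstacle will be establishing the displayed direct sum decomposition at the level of $\mathrm{DM}_{\mathrm{gm}}(k)_\bbQ$. On noncommutative motives the HPD semi-orthogonal decompositions readily produce such a splitting, each surviving exceptional object contributing a copy of the unit noncommutative motive. Transferring this splitting to Voevodsky motives through the functor of \S\ref{sub:homological} requires that each such unit --- twisted by the appropriate power of $\cO_{X_L}$ --- be translated into the correct Tate motive $\bbQ(r)[2r]$, and that the twists on the $Y_L$ side cancel out. This is precisely the technical content already handled in the proof of Theorem \ref{thm:HPD}; for Theorem \ref{thm:2} one only needs to keep careful book-keeping of the shifts appearing in the SOD.
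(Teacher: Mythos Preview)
Your proposal rests on a claim that is not actually established by the proof of Theorem~\ref{thm:HPD}: namely, that one obtains a direct sum decomposition
\[
M(X_L)_\bbQ \;\simeq\; M(Y_L)_\bbQ \,\oplus\, \bigoplus_{r=\mathrm{dim}(L)}^{i-1} \bbQ(r)[2r]^{\oplus l_r}
\]
in $\mathrm{DM}_{\mathrm{gm}}(k)_\bbQ$. The HPD semi-orthogonal decomposition yields such a splitting only at the level of $\NChow(k)_\bbQ$, i.e.\ after passing through the orbit category $\Chow(k)_\bbQ/_{-\otimes\bbQ(1)}$. The functor $\Phi$ is fully faithful, but the projection $\pi$ is only faithful, not full; hence an isomorphism in the orbit category does not lift to an isomorphism in $\Chow(k)_\bbQ$ with specified Tate twists. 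Theorem~\ref{thm:HPD} never needed such a lift: it transfers Schur-finiteness back and forth using only the faithfulness of $\pi$ (Lemma~\ref{lem:aux}(ii)). Your assertion that ``this is precisely the technical content already handled in the proof of Theorem~\ref{thm:HPD}'' is therefore incorrect, and without the displayed decomposition both the Kimura-finiteness conclusion and the additivity argument for \eqref{eq:equality}--\eqref{eq:equality-new} collapse.

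The paper works around this in two separate steps. For Kimura-finiteness, it unwinds what an isomorphism in the orbit category means and extracts morphisms $\alpha,\beta$ in $\Chow(k)_\bbQ$ exhibiting $\mathfrak{h}(X_L)_\bbQ$ merely as a \emph{direct summand} of a large sum $\bigoplus_n\bigl(\mathfrak{h}(Y_L)_\bbQ\oplus\cdots\bigr)(n)$ ranging over many Tate twists; this weaker statement suffices since Kimura-finiteness is stable under summands. For the numerical equalities \eqref{eq:equality}--\eqref{eq:equality-new}, the paper abandons any attempt to identify the motivic decomposition and instead computes $\mathrm{kim}_\pm$ as Euler characteristics via the periodic cyclic homology realization $HP^\pm\colon\NChow(k)_\bbQ\to\mathrm{Vect}_{\bbZ/2}(k)$, where the noncommutative splitting \eqref{eq:iso} can be applied directly. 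Your approach would work if the displayed decomposition in $\mathrm{DM}_{\mathrm{gm}}(k)_\bbQ$ were known, but proving it requires substantially more than ``book-keeping of the shifts.''
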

Note that the sum $l_{\mathrm{dim}(L)}+ \cdots + l_{i-1}$ reduces to $2(i-\mathrm{dim}(L))$ in Theorems \ref{thm:app1} and \ref{thm:app4}-\ref{thm:app5}, to $3(i-\mathrm{dim}(L))$ in Theorem \ref{thm:app2}(ii), and to $\binom{m}{r}(i-\mathrm{dim}(L))$ in Theorem \ref{thm:app6}. To the best of the author's knowledge, Theorem \ref{thm:2} is new in the literature. It not only provides us several new examples of Kimura-finite (mixed) motives but also computes the corresponding even/odd Kimura-dimensions.
\begin{example}
\begin{itemize}
\item[(i)] In the case of codimension $3$ at Theorem \ref{thm:app2}(i), $\mathrm{Gr}(2,6)_L$ is a fivefold and $\mathrm{Pf}(4,W^\ast)_L$ an elliptic curve; see \cite[page~33]{Lines}. Consequently, $\mathrm{kim}_+(M(\mathrm{Gr}(2,6)_L)_{\bbQ,+})=8$ and $\mathrm{kim}_-(M(\mathrm{Gr}(2,6)_L)_{\bbQ,-})=2$.
\item[(ii)] In the case of codimension $5$ at Theorem \ref{thm:app2}(ii), $\mathrm{Gr}(2,7)_L$ is a fivefold and $\mathrm{Pf}(4,W^\ast)_L$ a smooth projective curve of genus $43$; see \cite[page~35]{Lines}. Consequently, $\mathrm{kim}_+(M(\mathrm{Gr}(2,7)_L)_{\bbQ,+})= 8$ and $\mathrm{kim}_-(M(\mathrm{Gr}(2,7)_L)_{\bbQ,-})= 86$.
\end{itemize}
\end{example}
Let $K_0(\mathrm{DM}_{\mathrm{gm}}(k)_\bbQ)$ be the Grothendieck ring of the symmetric monoidal triangulated category of mixed motives. Following Kapranov \cite{Kapranov}, given any mixed motive $M \in \mathrm{DM}_{\mathrm{gm}}(k)_\bbQ$, we can consider the associated motivic zeta function $\zeta(M;t):=\sum_{n=0}^\infty [\mathrm{Sym}^n(M)]t^n$. Since the motivic zeta function of every Kimura-finite mixed motive is rational (see \cite[Prop.~4.6]{Andre-survey}), we obtain the following result:
\begin{corollary}
Let $X_L$ be as in Theorems \ref{thm:app1}-\ref{thm:app6}. Then, the associated motivic zeta function is rational $\zeta(M(X_L)_\bbQ; t) = \frac{1+p(t)t}{1+q(t)t}$, with $p(t)$, resp. $q(t)$, a polynomial of degree $\mathrm{kim}_-(M(X_L)_{\bbQ, -})-1$, resp. $\mathrm{kim}_+(M(X_L)_{\bbQ, +})-1$.
\end{corollary}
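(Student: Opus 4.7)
The plan is to reduce the corollary to a direct computation once the Kimura splitting furnished by Theorem~\ref{thm:2} is in hand. By Theorem~\ref{thm:2}, each $M(X_L)_\bbQ$ in Theorems~\ref{thm:app1}--\ref{thm:app6} is Kimura-finite, so it decomposes in $\mathrm{DM}_{\mathrm{gm}}(k)_\bbQ$ as
\[
M(X_L)_\bbQ \;\simeq\; M(X_L)_{\bbQ,+} \oplus M(X_L)_{\bbQ,-}
\]
with $M(X_L)_{\bbQ,+}$ even-dimensional of Kimura-dimension $d_+:=\mathrm{kim}_+(M(X_L)_{\bbQ,+})$ and $M(X_L)_{\bbQ,-}$ odd-dimensional of Kimura-dimension $d_-:=\mathrm{kim}_-(M(X_L)_{\bbQ,-})$. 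These two integers are computed explicitly via \eqref{eq:equality}--\eqref{eq:equality-new} in Theorem~\ref{thm:2}.

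The first step is the multiplicativity of the motivic zeta function under direct sums. Using the standard isomorphism $\mathrm{Sym}^n(A\oplus B)\simeq \bigoplus_{i+j=n}\mathrm{Sym}^i(A)\otimes \mathrm{Sym}^j(B)$, valid in any $\bbQ$-linear idempotent complete symmetric monoidal category, one obtains the identity
\[
\zeta(M(X_L)_\bbQ;t)\;=\;\zeta(M(X_L)_{\bbQ,+};t)\cdot \zeta(M(X_L)_{\bbQ,-};t)
\]
inside $K_0(\mathrm{DM}_{\mathrm{gm}}(k)_\bbQ)[\![t]\!]$, so it suffices to analyze each factor separately.

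For the odd factor, the defining vanishing $\mathrm{Sym}^n(M(X_L)_{\bbQ,-})=0$ for $n>d_-$, together with the non-vanishing $\mathrm{Sym}^{d_-}(M(X_L)_{\bbQ,-})\neq 0$, shows that $\zeta(M(X_L)_{\bbQ,-};t)$ is a polynomial of degree $d_-$ in $t$ with constant term $1$; writing it as $1+p(t)t$ gives $\deg p = d_--1$. For the even factor I would invoke the Koszul-type identity (this is precisely the content of \cite[Prop.~4.6]{Andre-survey})
\[
\zeta(M(X_L)_{\bbQ,+};t)\cdot \sum_{n=0}^{d_+}(-1)^n[\wedge^n(M(X_L)_{\bbQ,+})]\,t^n \;=\; 1,
\]
together with the non-vanishing of $\wedge^{d_+}(M(X_L)_{\bbQ,+})$ built into the definition of $d_+$, to conclude that $\zeta(M(X_L)_{\bbQ,+};t)=1/Q(t)$ with $Q(t)$ a polynomial of degree $d_+$ and $Q(0)=1$, hence of the form $Q(t)=1+q(t)t$ with $\deg q = d_+-1$. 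Combining the two factors yields the claimed expression $\zeta(M(X_L)_\bbQ;t)=\tfrac{1+p(t)t}{1+q(t)t}$ with the correct degrees.

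The one step that requires care is the passage from \emph{non-vanishing of the object} $\wedge^{d_+}(M(X_L)_{\bbQ,+})$ (resp.\ $\mathrm{Sym}^{d_-}(M(X_L)_{\bbQ,-})$) in the triangulated category to the \emph{non-vanishing of its class} in $K_0(\mathrm{DM}_{\mathrm{gm}}(k)_\bbQ)$, which is what pins down the top-degree coefficient and hence the exact degrees of $p$ and $q$. I would settle this by applying a conservative realization functor on $\mathrm{DM}_{\mathrm{gm}}(k)_\bbQ$ (étale or Hodge, say) under which $\wedge^n$ and $\mathrm{Sym}^n$ correspond to genuine exterior and symmetric powers of graded vector spaces; this is the only genuinely subtle point, and it is the implicit content of the rationality statement in \cite[Prop.~4.6]{Andre-survey}.
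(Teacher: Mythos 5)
Your argument is, in outline, the standard proof of the very statement the paper simply quotes: after establishing Kimura-finiteness and the values of $\mathrm{kim}_\pm$ in Theorem \ref{thm:2}, the paper's proof of this corollary consists of the single citation \cite[Prop.~4.6]{Andre-survey}. Your steps --- the splitting $M(X_L)_\bbQ\simeq M(X_L)_{\bbQ,+}\oplus M(X_L)_{\bbQ,-}$, the multiplicativity of $\zeta$ coming from $\mathrm{Sym}^n(A\oplus B)\simeq\oplus_{i+j=n}\mathrm{Sym}^i(A)\otimes\mathrm{Sym}^j(B)$, the polynomiality of the odd factor, and the identity $\bigl(\sum_n(-1)^n[\wedge^n a]t^n\bigr)\cdot\bigl(\sum_n[\mathrm{Sym}^n a]t^n\bigr)=1$, which holds in the Grothendieck ring of any $\bbQ$-linear idempotent complete symmetric monoidal category via the Littlewood--Richardson decomposition of $S_\lambda(a)\otimes S_\mu(a)$ --- are all correct, so you are essentially reproving the cited proposition rather than invoking it; that is a legitimate, slightly more self-contained route.

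The genuine gap is your final step. You correctly isolate the delicate point (the exact degrees require the classes $[\mathrm{Sym}^{d_-}(M(X_L)_{\bbQ,-})]$ and $[\wedge^{d_+}(M(X_L)_{\bbQ,+})]$ to be nonzero in $K_0(\mathrm{DM}_{\mathrm{gm}}(k)_\bbQ)$), but the proposed remedy, a ``conservative realization functor on $\mathrm{DM}_{\mathrm{gm}}(k)_\bbQ$ (\'etale or Hodge)'', is not available: conservativity of these realizations is precisely the open conservation conjecture (see \cite{Ayoub}, already cited in the paper), and even granting it, non-vanishing of a realized object does not by itself force non-vanishing of its class in the triangulated Grothendieck ring. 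What does work, and needs no conservativity, is mere functoriality of $K_0$ under a $\bbQ$-linear symmetric monoidal realization: such a functor commutes with Schur functors; since $\wedge^{d_++1}$ (resp. $\mathrm{Sym}^{d_-+1}$) of the even (resp. odd) part vanishes, its image is a purely even (resp. purely odd) super vector space, of dimension $d_+=\chi(M(X_L)_{\bbQ,+})$ (resp. $d_-=-\chi(M(X_L)_{\bbQ,-})$) by the Euler-characteristic identities of \cite[\S3]{Andre-survey} which the paper itself exploits (via $HP^\pm$, i.e. $\theta^\pm$) in the proof of Theorem \ref{thm:2}; hence the top wedge (resp. symmetric) power realizes to a one-dimensional space, whose $K_0$-class is nonzero, and therefore so is the original class in $K_0(\mathrm{DM}_{\mathrm{gm}}(k)_\bbQ)$. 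Alternatively, one may quote that for a Kimura-finite object the top exterior power of the even part and the top symmetric power of the odd part are $\otimes$-invertible (cf. \cite{Kimura}, \cite{Andre-survey}), so their classes are units. With either repair your argument is complete and matches the content of \cite[Prop.~4.6]{Andre-survey}.
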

Finally, it should be mentioned that the above Remark \ref{rk:Folds}(ii) also holds {\em mutatis mutandis} with Schur-finiteness replaced by Kimura-finitess.

\section{Preliminaries}
\subsection{Dg categories}\label{sub:dg}
For a survey on dg categories consult Keller's ICM talk \cite{ICM-Keller}. Let $\cC(k)$ be the category of complexes of $k$-vector spaces. A {\em dg category $\cA$} is a category enriched over $\cC(k)$ and a {\em dg functor $F\colon \cA \to \cB$} is a functor enriched over $\cC(k)$. Every (dg) $k$-algebra $A$ gives naturally rise to a dg category with a single object. Another source of examples is provided by schemes since the category of perfect complexes $\perf(Z)$ of every quasi-compact quasi-separated $k$-scheme $Z$ admits a canonical dg enhancement\footnote{When $X$ is quasi-projective this dg enhancement is unique; see Lunts-Orlov \cite[Thm.~2.12]{LO}.} $\perf_\dg(Z)$. Following Kontsevich \cite{Miami,finMot,IAS}, a dg category $\cA$ is called {\em smooth} if it is perfect as a bimodule over itself and {\em proper} if $\sum_j \mathrm{dim}\, H^j\cA(x,y)< \infty$ for any pair of objects $(x,y)$. Examples include the dg categories of perfect complexes $\perf_\dg(Z)$ associated to smooth proper $k$-schemes $Z$. Let $\dgcat_{\mathrm{sp}}(k)$ be the category of smooth proper dg categories and dg functors.

\subsection{Noncommutative Chow motives}\label{sub:homological}
For a book on noncommutative motives consult \cite{book}. Recall from \cite[\S4.1]{book} the construction of the additive category of noncommutative Chow motives $\NChow(k)_\bbQ$. By construction, this category is $\bbQ$-linear, idempotent complete, symmetric monoidal, and comes equipped with a symmetric monoidal functor $U(-)_\bbQ\colon \dgcat_{\mathrm{sp}}(k) \to \NChow(k)_\bbQ$.

\subsection{Orbit categories}\label{sub:orbit}
Let $(\cC, \otimes, {\bf 1})$ be a $\bbQ$-linear, additive, symmetric monoidal category, and $\cO \in \cC$ a $\otimes$-invertible object. The {\em orbit category} $\cC/_{\!-\otimes \cO}$ has the same objects as $\cC$ and morphisms $\Hom_{\cC/_{\!-\otimes \cO}}(a,b):=\oplus_{n \in \bbZ} \Hom_\cC(a, b \otimes \cO^{\otimes n})$. Given objects $a, b, c$ and morphisms $\mathrm{f}=\{f_n\}_{n \in \bbZ}$ and $\mathrm{g}=\{g_n\}_{n \in \bbZ}$, the $j^{\mathrm{th}}$-component of $\mathrm{g}\circ \mathrm{f}$ is defined as $\sum_n (g_{j -n} \otimes \cO^{\otimes n})\circ f_n$. By construction, we have the canonical functor $\pi\colon \cC \to \cC/_{\!-\otimes \cO}$, given by $a \mapsto a$ and $f \mapsto \mathrm{f}=\{f_n\}_{n \in \bbZ}$, where $f_0=f$ and $f_n=0$ if $n\neq 0$. Moreover, the category $\cC/_{\!-\otimes \cO}$ is $\bbQ$-linear, additive, and inherits from $\cC$ a symmetric monoidal structure making the functor $\pi$ symmetric monoidal.

\section{Proof of Theorem \ref{thm:HPD}}
By definition of the Lefschetz decomposition $\langle \bbA_0, \bbA_1(1), \ldots, \bbA_{i-1}(i-1)\rangle$, we have a chain of admissible triangulated subcategories $\bbA_{i-1}\subseteq \cdots \subseteq \bbA_1\subseteq \bbA_0$ with $\bbA_r(r):=\bbA_r \otimes \cO_X(r)$. Note that the category $\bbA_r(r)$ is equivalent to $\bbA_r$. Let $\mathfrak{a}_r$ be the right orthogonal complement to $\bbA_{r+1}$ in $\bbA_r$; these are called the {\em primitive subcategories} in \cite[\S4]{KuznetsovHPD}. By definition, we have semi-orthogonal decompositions:
\begin{eqnarray}\label{eq:decomp1}
\bbA_r = \langle \mathfrak{a}_r, \mathfrak{a}_{r+1}, \ldots, \mathfrak{a}_{i-1} \rangle && 0\leq r \leq i-1\,.
\end{eqnarray}
As proved in \cite[Thm.~6.3]{KuznetsovHPD}, the category $\perf(Y)$ admits a HP-dual Lefschetz decomposition $\langle \bbB_{j-1}(1-j), \bbB_{j-2}(2-j), \ldots, \bbB_0\rangle$ with respect to $\cO_Y(1)$. As above, we have a chain of admissible subcategories $\bbB_{j-1} \subseteq \bbB_{j-2} \subseteq \cdots \subseteq \bbB_0$. Moreover, the primitive subcategories coincide (via a Fourier-Mukai functor) with those of $\perf(X)$ and we have semi-orthogonal decompositions:
\begin{eqnarray}\label{eq:decomp2}
\bbB_r=\langle \mathfrak{a}_0, \mathfrak{a}_1, \ldots, \mathfrak{a}_{\mathrm{dim}(V)-r-2}\rangle && 0 \leq r \leq j-1\,.
\end{eqnarray}
Furthermore, the assumptions $\mathrm{dim}(X_L)=\mathrm{dim}(X)-\mathrm{dim}(L)$ and $\mathrm{dim}(Y_L)=\mathrm{dim}(Y) - \mathrm{dim}(L^\perp)$ imply the existence of semi-orthogonal decompositions
\begin{equation}\label{eq:semi-1}
\perf(X_L)=\langle \bbC_L, \bbA_{\mathrm{dim}(L)}(1), \ldots, \bbA_{i-1}(i-\mathrm{dim}(L))\rangle 
\end{equation}
\begin{equation}\label{eq:semi-2}
\perf(Y_L)=\langle \bbB_{j-1}(\mathrm{dim}(L^\perp)-j), \ldots, \bbB_{\mathrm{dim}(L^\perp)}(-1), \bbC_L \rangle\,,
\end{equation}
where $\bbC_L$ is a common triangulated category. Let us denote by $\bbC_L^\dg$, $\bbA_r^\dg$, and $\mathfrak{a}_r^\dg$, the dg enhancement of $\bbC_L$, $\bbA_r$, and $\mathfrak{a}_r$, induced from $\perf_\dg(X_L)$. Similarly, let us denote by $\bbC_L^{\dg'}$ and $\bbB_r^{\dg}$ the dg enhancement of $\bbC_L$ and $\bbB_r$ induced from $\perf_\dg(Y_L)$. Note that since $X_L$ and $Y_L$ are smooth projective $k$-schemes, all the preceding dg categories are smooth and proper. As explained in \cite[\S2.4.1]{book}, the above semi-orthogonal decompositions \eqref{eq:semi-1}-\eqref{eq:semi-2} give rise to the following direct sum decompositions of noncommutative Chow motives:
$$ U(\perf_\dg(X_L))_\bbQ \simeq U(\bbC^\dg_L)_\bbQ \oplus U(\bbA^\dg_{\mathrm{dim}(L)})_\bbQ \oplus \cdots \oplus U(\bbA^\dg_{i-1})_\bbQ$$
$$ U(\perf_\dg(Y_L))_\bbQ \simeq U(\bbB_{j-1}^\dg)_\bbQ \oplus \cdots \oplus U(\bbB^\dg_{\mathrm{dim}(L^\perp)})_\bbQ \oplus U(\bbC_L^{\dg'})_\bbQ\,.$$
Since by assumption the triangulated category $\bbA_0$ admits a full exceptional collection, the noncommutative Chow motive $U(\bbA_0^\dg)_\bbQ$ is isomorphic to a finite direct sum of copies of $U(k)_\bbQ$; see \cite[\S2.4.2]{book}. In particular, it is Schur-finite. Making use of the semi-orthogonal decompositions \eqref{eq:decomp1}-\eqref{eq:decomp2}, we hence conclude that the noncommutative Chow motives $U(\bbA^\dg_{\mathrm{dim}(L)})_\bbQ, \ldots, U(\bbA^\dg_{i-1})_\bbQ$ and $U(\bbB^\dg_{j-1})_\bbQ, \ldots, U(\bbB^\dg_{\mathrm{dim}(L^\perp)})_\bbQ$ are also Schur-finite. This implies that $U(\perf_\dg(X_L))_\bbQ$ is Schur-finite if and only if $U(\bbC_L^\dg)_\bbQ$ is Schur-finite and, similarly, that $U(\perf_\dg(Y_L))_\bbQ$ is Schur-finite if and only if $U(\bbC_L^{\dg'})_\bbQ$ is Schur-finite. Since the functor $\perf(X_L) \to \bbC_L \to \perf(Y_L)$ is of Fourier-Mukai type, the dg categories $\bbC_L^\dg$ and $\bbC_L^{\dg'}$ are Morita equivalent. Using the fact that the functor $U(-)_\bbQ$ inverts Morita equivalences (see \cite[\S1.6 and Thm.~2.9]{book}), we hence conclude that $U(\bbC^\dg_L)_\bbQ \simeq U(\bbC^{\dg'}_L)_\bbQ$. Consequently, the proof of Theorem \ref{thm:HPD} follows now automatically from the following result:
\begin{proposition}\label{prop:aux}
Given a smooth projective $k$-scheme projective $k$-scheme $Z$, the (mixed) motive $M(Z)_\bbQ$ is Schur-finite if and only if the noncommutative Chow motive $U(\perf_\dg(Z))_\bbQ$ is Schur-finite.
\end{proposition}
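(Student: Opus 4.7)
The plan is to bridge Voevodsky's mixed motives and noncommutative Chow motives through the classical category $\Chow(k)_\bbQ$ of Chow motives and the orbit category construction recalled in \S\ref{sub:orbit}. More precisely, I would argue in the following sequence of equivalences, each carried out inside an idempotent-complete, $\bbQ$-linear symmetric monoidal category:
\begin{equation*}
M(Z)_\bbQ \ \text{Schur-finite} \ \Longleftrightarrow \ h(Z)_\bbQ \ \text{Schur-finite} \ \Longleftrightarrow \ \pi(h(Z)_\bbQ) \ \text{Schur-finite} \ \Longleftrightarrow \ U(\perf_\dg(Z))_\bbQ \ \text{Schur-finite}.
\end{equation*}

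First, I would invoke the fully faithful symmetric monoidal embedding $\Chow(k)_\bbQ \hookrightarrow \mathrm{DM}_{\mathrm{gm}}(k)_\bbQ$ of Voevodsky sending the Chow motive $h(Z)_\bbQ$ of a smooth projective $k$-scheme $Z$ to $M(Z)_\bbQ$ (up to the standard twist/shift convention). Since this embedding is $\bbQ$-linear, symmetric monoidal, and fully faithful, it both preserves and reflects vanishing of Schur functors; this yields the first equivalence. For the second equivalence I would apply the canonical projection functor $\pi\colon \Chow(k)_\bbQ \to \Chow(k)_\bbQ/_{\!-\otimes \bbQ(1)}$ to the Tate orbit category. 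The forward direction is immediate from symmetric monoidality of $\pi$. For the reverse direction, I would use that $\pi$ is faithful: by definition, the $n=0$ component of $\pi(f)$ equals $f$, so $\pi(f)=0$ forces $f=0$; applied to $f=\id_{S_\lambda(h(Z)_\bbQ)}$, vanishing of $S_\lambda(\pi(h(Z)_\bbQ))=\pi(S_\lambda(h(Z)_\bbQ))$ implies vanishing of $S_\lambda(h(Z)_\bbQ)$ itself.

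The third equivalence is the key noncommutative ingredient: the orbit category $\Chow(k)_\bbQ/_{\!-\otimes \bbQ(1)}$ embeds fully faithfully, by a symmetric monoidal $\bbQ$-linear functor, into $\NChow(k)_\bbQ$, and under this embedding $\pi(h(Z)_\bbQ)$ corresponds precisely to $U(\perf_\dg(Z))_\bbQ$. This is a theorem of the author (see \cite[Thm.~6.10 and Prop.~6.15]{book}, or the analogous statement in the original paper cited there). Given full faithfulness and symmetric monoidality, Schur-finiteness is once again preserved and reflected, and the chain of equivalences closes.

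The main obstacle, and the only nontrivial input, is the third equivalence: identifying the image of $\pi(h(Z)_\bbQ)$ with $U(\perf_\dg(Z))_\bbQ$ via a fully faithful symmetric monoidal functor out of the orbit category. Once this is accepted as a black box from the general theory of noncommutative motives, the remaining steps reduce to the elementary observation that any fully faithful (or merely faithful) symmetric monoidal $\bbQ$-linear functor between idempotent-complete categories detects Schur-finiteness, applied twice.
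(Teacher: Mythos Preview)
Your proposal is correct and follows essentially the same route as the paper: both factor through $\Chow(k)_\bbQ$ and its Tate orbit category, invoking the faithfulness of $\pi$, the fully faithful symmetric monoidal embedding $\Phi$ into $\NChow(k)_\bbQ$, and Voevodsky's embedding into $\mathrm{DM}_{\mathrm{gm}}(k)_\bbQ$. The only detail the paper makes explicit that you elide with ``up to the standard twist/shift convention'' is that matching $\mathfrak{h}(Z)_\bbQ$ with $M(Z)_\bbQ$ requires the duality auto-equivalence on $\Chow(k)_\bbQ$, together with the remark that Schur-finiteness is stable under duality.
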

\begin{proof}
Recall from \cite[\S4.1]{Andre} the construction of the classical category of Chow motives $\Chow(k)_\bbQ$. This category is $\bbQ$-linear, idempotent complete, symmetric monoidal, and comes equipped with a (contravariant) symmetric monoidal functor $\mathfrak{h}(-)_\bbQ\colon \mathrm{SmProj}(k)^\op \to \Chow(k)_\bbQ$, defined on smooth projective $k$-schemes. As proved in \cite[Thm.~1.1]{CvsNC} (see also \cite[Thm.~4.3]{book}), there exists a $\bbQ$-linear, fully-faithful, symmetric monoidal functor $\Phi$ making the following diagram commute
\begin{equation}\label{eq:diagram1}
\xymatrix{
\mathrm{SmProj}(k)^\op \ar[rr]^-{Z\mapsto \perf_\dg(Z)} \ar[d]_-{\mathfrak{h}(-)_\bbQ} && \dgcat_{\mathrm{sp}}(k) \ar[dd]^-{U(-)_\bbQ} \\
\Chow(k)_\bbQ \ar[d]_-\pi && \\
\Chow(k)_\bbQ/_{\!-\otimes \bbQ(1)} \ar[rr]_-{\Phi} && \NChow(k)_\bbQ\,,
}
\end{equation}
where $\bbQ(1)$ stands for the Tate motive. Since the functor $\pi$ is faithful and the functor $\Phi$ is fully-faithful, it follows from Lemma \ref{lem:aux} below and from the commutative diagram \eqref{eq:diagram1} that the Chow motive $\mathfrak{h}(Z)_\bbQ$ is Schur-finite if and only the noncommutative Chow motive $U(\perf_\dg(Z))_\bbQ$ is Schur-finite.

The category of Chow motives $\Chow(k)_\bbQ$ is not only symmetric monoidal but moreover {\em rigid}, \ie all its objects are (strongly) dualizable. Let us denote by $(-)^\vee\colon \Chow(k)^\op_\bbQ \stackrel{\simeq}{\too} \Chow(k)_\bbQ$ the (contravariant) duality auto-equivalence and by $\mathfrak{h}(-)_\bbQ^\vee$ the (covariant) composition $(-)^\vee \circ \mathfrak{h}(-)_\bbQ$. As proved by Voevodsky in \cite[Prop.~2.1.4 and Cor.~4.2.6]{Voevodsky} (see also \cite[Thm.~18.3.1.1]{Andre}), there exists a $\bbQ$-linear, fully-faithful, symmetric monoidal functor $\Psi$ making the diagram commute:
\begin{equation}\label{eq:diagram2}
\xymatrix{
\mathrm{SmProj}(k) \ar[rr]^-{Z \mapsto Z} \ar[d]_-{\mathfrak{h}(-)_\bbQ^\vee} && \mathrm{Sm}(k) \ar[d]^-{M(-)_\bbQ} \\
\Chow(k)_\bbQ \ar[rr]_-{\Psi} && \mathrm{DM}_{\mathrm{gm}}(k)_\bbQ\,.
}
\end{equation}
Since Schur-finiteness is stable under duality and $\Psi$ is fully-faithful, it follows then from Lemma \ref{lem:aux} below and from the commutative diagram \eqref{eq:diagram2} that the Chow motive $\mathfrak{h}(Z)_\bbQ$ is Schur-finite if and only if the (mixed) motive $M(Z)_\bbQ$ is Schur-finite. This concludes the proof of Proposition \ref{prop:aux}.
\end{proof}
\begin{lemma}\label{lem:aux}
Let $(\cC, \otimes, {\bf 1})$ and $(\cC',\otimes', {\bf 1}')$ be two $\bbQ$-linear, idempotent complete, symmetric monoidal categories, and $H\colon \cC \to \cC'$ a $\bbQ$-linear, symmetric monoidal functor. Given any object $a \in \cC$, the following holds:
\begin{itemize}
\item[(i)] If $a$ is Schur-finite, then $H(a)$ is also Schur-finite.
\item[(ii)] If $H$ is faithful and $H(a)$ is Schur-finite, then $a$ is also Schur-finite.
\end{itemize}
\end{lemma}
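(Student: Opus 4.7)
The plan is to reduce both (i) and (ii) to a single natural isomorphism
\[
H(S_\lambda(a)) \simeq S_\lambda(H(a))
\]
valid for every partition $\lambda$ of every integer $n\geq 1$. Once this compatibility is in place, (i) is immediate and (ii) follows from the faithfulness of $H$ by a direct manipulation on identity morphisms.

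To establish the compatibility, I would unfold the definition of $S_\lambda(a)$ as the direct summand of $a^{\otimes n}$ cut out by the idempotent $e_\lambda\in\bbQ[\mathfrak{S}_n]$, where $\mathfrak{S}_n$ acts by permutation of tensor factors. Three properties of $H$ then combine to transport this construction to $\cC'$: since $H$ is symmetric monoidal, the natural isomorphism $H(a^{\otimes n})\simeq H(a)^{\otimes n}$ is $\mathfrak{S}_n$-equivariant; since $H$ is additive and $\bbQ$-linear, it sends $e_\lambda$ to the analogous idempotent on $H(a)^{\otimes n}$ and respects the resulting direct sum decomposition; and since $\cC'$ is idempotent complete, the splitting actually exists in $\cC'$. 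Identifying the summands cut out by $e_\lambda$ on both sides yields the desired isomorphism canonically and functorially in $a$.

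Part (i) is then immediate: if $S_\lambda(a)=0$ for some $\lambda$, then $S_\lambda(H(a))\simeq H(S_\lambda(a))=H(0)=0$, and hence $H(a)$ is annihilated by the same Schur-functor. For part (ii), assume $S_\lambda(H(a))=0$; then $H(S_\lambda(a))=0$, so $\id_{H(S_\lambda(a))}=0$. By functoriality this identity morphism equals $H(\id_{S_\lambda(a)})$, so $H$ sends $\id_{S_\lambda(a)}$ to $0$. Faithfulness of $H$ (together with $H(0)=0$, which comes from $\bbQ$-linearity) forces $\id_{S_\lambda(a)}=0$, and hence $S_\lambda(a)=0$ in $\cC$.

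The only delicate point, and thus the main obstacle, is the bookkeeping required to verify $H(S_\lambda(a))\simeq S_\lambda(H(a))$, in particular making precise the interaction between the $\mathfrak{S}_n$-action coming from the symmetric monoidal structure and the image of the idempotent $e_\lambda$ under $H$. Once this compatibility is granted, the rest is a routine exercise in functoriality and faithfulness.
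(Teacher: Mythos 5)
Your argument is correct and is exactly the intended one: the paper itself leaves this lemma as ``a simple exercise,'' and the expected solution is precisely the compatibility $H(S_\lambda(a))\simeq S_\lambda(H(a))$ coming from $\bbQ$-linearity, symmetry of the monoidal structure, and preservation of split idempotents, followed by the identity-morphism trick for faithfulness in part (ii). Nothing is missing; your handling of the $\mathfrak{S}_n$-equivariance of $H(a^{\otimes n})\simeq H(a)^{\otimes n}$ and of $H(\id_{S_\lambda(a)})=0$ implying $S_\lambda(a)=0$ is the standard and complete way to discharge the ``delicate point'' you flag.
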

\begin{proof}
The proof is a simple exercise which we leave to the reader.
\end{proof}

\section{Proof of Theorem \ref{thm:2}}
In the particular cases of Theorems \ref{thm:app1}-\ref{thm:app6}, we have $\mathrm{dim}(Y_L)\leq 1$ (in some cases $Y_L=\emptyset$) and the semi-orthogonal decomposition \eqref{eq:semi-1} reduces to
\begin{equation}\label{eq:decomposition}
\perf(X_L)=\langle \perf(Y_L), \bbA_{\mathrm{dim}(L)}, \ldots, \bbA_{i-1}(i-\mathrm{dim}(L))\rangle\,,
\end{equation}
\ie the common triangulated category $\bbC_L$ agrees with $\perf(Y_L)$. Recall that the triangulated category $\bbA_r$ admits a full exceptional collection of length $l_r$. Similarly to the proof of Theorem \ref{thm:HPD}, the semi-orthogonal decomposition \eqref{eq:decomposition} gives then rise to the following direct sum decomposition of noncommutative Chow motives
\begin{equation}\label{eq:iso}
U(\perf_\dg(X_L))_\bbQ \simeq U(\perf_\dg(Y_L))_\bbQ \oplus \oplus_{r=\mathrm{dim}(L)}^{i-1} U(k)^{\oplus l_r}_\bbQ\,.
\end{equation} 
Thanks to the commutative diagram \eqref{eq:diagram1} and to the fact that the functor $\Phi$ is fully-faithful, \eqref{eq:iso} yields an isomorphism in the orbit category of Chow motives
$$ \pi(\mathfrak{h}(X_L)_\bbQ)\simeq \pi(\mathfrak{h}(Y_L)_\bbQ \oplus \oplus_{r=\mathrm{dim}(L)}^{i-1} \mathfrak{h}(\mathrm{Spec}(k))_\bbQ^{\oplus l_r})\,.$$
Therefore, by definition of the orbit category, there exist morphisms
$$ \mathrm{f}=\{f_n\}_{n \in \bbZ} \in \Hom_{\Chow(k)_\bbQ}(\mathfrak{h}(X_L)_\bbQ, (\mathfrak{h}(Y_L)_\bbQ \oplus \oplus_{r=\mathrm{dim}(L)}^{i-1} \mathfrak{h}(\mathrm{Spec}(k))^{\oplus l_r}_\bbQ)(n))$$
$$ \mathrm{g}=\{g_n\}_{n \in \bbZ} \in \Hom_{\Chow(k)_\bbQ}(\mathfrak{h}(Y_L)_\bbQ \oplus \oplus_{r=\mathrm{dim}(L)}^{i-1} \mathfrak{h}(\mathrm{Spec}(k))^{\oplus l_r}_\bbQ, \mathfrak{h}(X_L)_\bbQ(n))$$
verifying the equalities $\mathrm{g}\circ \mathrm{f}=\id=\mathrm{f}\circ \mathrm{g}$; in order to simplify the exposition, we (will) write $-(n)$ instead of $- \otimes \bbQ(1)^{\otimes n}$. Recall that by definition of the category of Chow motives, we have $f_n=0$ if $n \notin \{-\mathrm{dim}(X_L), \ldots,  \mathrm{dim}(Y_L)\}$ and $g_n=0$ if $n \in \{-\mathrm{dim}(Y_L), \ldots, \mathrm{dim}(X_L)\}$. The sets $\{f_n\,|\, - \mathrm{dim}(X_L) \leq n \leq \mathrm{dim}(Y_L)\}$ and $\{g_{-n}(n)\,|\,-\mathrm{dim}(X_L)\leq n \leq \mathrm{dim}(Y_L)\}$ give then rise to the following morphisms
$$ \alpha\colon \mathfrak{h}(X_L)_\bbQ \too \oplus_{n=-\mathrm{dim}(X_L)}^{\mathrm{dim}(Y_L)} (\mathfrak{h}(Y_L)_\bbQ \oplus \oplus_{r=\mathrm{dim}(L)}^{i-1} \mathfrak{h}(\mathrm{Spec}(k))^{\oplus l_r}_\bbQ)(n)$$
$$\beta\colon \oplus_{n=-\mathrm{dim}(X_L)}^{\mathrm{dim}(Y_L)} (\mathfrak{h}(Y_L)_\bbQ \oplus \oplus_{r=\mathrm{dim}(L)}^{i-1} \mathfrak{h}(\mathrm{Spec}(k))^{\oplus l_r}_\bbQ)(n) \too \mathfrak{h}(X_L)_\bbQ\,.$$
The composition $\beta\circ \alpha$ agrees with the $0^{\mathrm{th}}$ component of $\mathrm{g}\circ \mathrm{f}=\id$, \ie with the identity of $\mathfrak{h}(X_L)_\bbQ$. Consequently, $\mathfrak{h}(X_L)_\bbQ$ is a direct summand of the direct sum
\begin{equation}\label{eq:directsum}
\oplus_{n=-\mathrm{dim}(X_L)}^{\mathrm{dim}(Y_L)} (\mathfrak{h}(Y_L)_\bbQ \oplus \oplus_{r=\mathrm{dim}(L)}^{i-1} \mathfrak{h}(\mathrm{Spec}(k))^{\oplus l_r}_\bbQ)(n)\,.
\end{equation}
Using the fact that $\mathfrak{h}(Y_L)_\bbQ$ is Kimura-finite, that $\wedge^2(\bbQ(1))=0$, and that Kimura-finiteness is stable under direct sums, direct summands, and tensor products, we hence conclude from \eqref{eq:directsum} that the Chow motive $\mathfrak{h}(X_L)_\bbQ$ is also Kimura-finite. The Kimura-finiteness of the (mixed) motive $M(X_L)_\bbQ$ follows now from the combination of the commutative diagram \eqref{eq:diagram2} with the fact that Kimura-finiteness is stable under duality and preserved by $\bbQ$-linear symmetric monoidal functors.

Let us now prove the equalities \eqref{eq:equality}-\eqref{eq:equality-new}. By definition, the even and odd Kimura-dimensions are invariant under duality. Therefore, thanks to the commutative diagram \eqref{eq:diagram2} and to the fact that the functor $\Psi$ is fully-faithful, it suffices to prove the following equalities:
\begin{eqnarray}
\mathrm{kim}_+(\mathfrak{h}(X_L)_{\bbQ,+}) &= &\mathrm{kim}_+(\mathfrak{h}(Y_L)_{\bbQ,+}) + (l_{\mathrm{dim}(L)}+ \cdots + l_{i-1})\,. \label{eq:equality1} \\
\mathrm{kim}_-(\mathfrak{h}(X_L)_{\bbQ,-}) & = & \mathrm{kim}_-(\mathfrak{h}(Y_L)_{\bbQ,-}) \label{eq:equality1-new}\,.
\end{eqnarray}
As explained in \cite[\S3]{Andre-survey}, we have the following equalities
\begin{eqnarray}\label{eq:equality-2}
& \mathrm{kim}_+(\mathfrak{h}(X_L)_{\bbQ,+})= \chi(\mathfrak{h}(X_L)_{\bbQ, +}) &  \mathrm{kim}_-(\mathfrak{h}(X_L)_{\bbQ,-})= - \chi(\mathfrak{h}(X_L)_{\bbQ, -})\,,
\end{eqnarray}
where $\chi$ stands for the Euler characteristic computed in the rigid symmetric monoidal category of Chow motives. In order to compute this Euler characteristic, consider the $\bbQ$-linear symmetric monoidal functor $HP^\pm \colon \NChow(k)_\bbQ \to \mathrm{Vect}_{\bbZ/2}(k)$, induced by periodic cyclic homology, with values in the category of finite dimensional $\bbZ/2$-graded $k$-vector spaces; see \cite[Thm.~7.2]{JEMS}. Note that every object $(V^+,V^-)$ of the category $\mathrm{Vect}_{\bbZ/2}(k)$ is Kimura-finite, that $(V^+,V^-)_+\simeq (V^+,0)$ and $(V^+,V^-)_-\simeq (0,V^-)$, and that $\chi((V^+,0))=\mathrm{dim}(V^+)$ and $\chi((0,V^-))=-\mathrm{dim}(V^-)$. Consider also the following composition
$$ \theta^\pm\colon \Chow(k)_\bbQ \stackrel{\pi}{\too} \Chow(k)_\bbQ/_{\!-\otimes \bbQ(1)} \stackrel{\Phi}{\too} \NChow(k)_\bbQ \stackrel{HP^\pm}{\too} \mathrm{Vect}_{\bbZ/2}(k)\,.$$
When restricted to the $\bbQ$-algebra of endomorphisms of the $\otimes$-unit, the functors $\pi$ and $\Psi$ become fully-faithful, and the functor $HP^\pm$ faithful. This implies that the Euler characteristic of any Chow motive can be computed after application of the functor $\theta^\pm$. Moreover, since the decomposition of a Kimura-finite object into even/odd parts is unique (see \cite[Prop.~6.3]{Kimura}), we hence conclude that
$$ \chi(\mathfrak{h}(X_L)_{\bbQ,+}) = \chi(\theta^\pm(\mathfrak{h}(X_L)_{\bbQ,+}))= \chi(\theta^\pm(\mathfrak{h}(X_L)_\bbQ)_+)=\mathrm{dim}(\theta^+(\mathfrak{h}(X_L)_\bbQ))$$
$$ \chi(\mathfrak{h}(X_L)_{\bbQ,-}) = \chi(\theta^\pm(\mathfrak{h}(X_L)_{\bbQ,-}))= \chi(\theta^\pm(\mathfrak{h}(X_L)_\bbQ)_-)=-\mathrm{dim}(\theta^-(\mathfrak{h}(X_L)_\bbQ))\,.$$
Thanks to these computations, the above equalities \eqref{eq:equality-2} reduces to 
\begin{eqnarray*}\label{eq:key1}
\mathrm{kim}_+(\mathfrak{h}(X_L)_{\bbQ,+}) = \mathrm{dim}(\theta^+(\mathfrak{h}(X_L)_\bbQ)) &&  \mathrm{kim}_-(\mathfrak{h}(X_L)_{\bbQ,-}) = \mathrm{dim}(\theta^-(\mathfrak{h}(X_L)_\bbQ)) \,.
\end{eqnarray*}
The above arguments hold {\em mutatis mutandis} for the Kimura-finite Chow motive $\mathfrak{h}(Y_L)_\bbQ$. Therefore, we also have the equality
\begin{eqnarray*}\label{eq:key2}
\mathrm{kim}_+(\mathfrak{h}(Y_L)_{\bbQ,+}) = \mathrm{dim}(\theta^+(\mathfrak{h}(Y_L)_\bbQ)) &&  \mathrm{kim}_-(\mathfrak{h}(Y_L)_{\bbQ,-}) = \mathrm{dim}(\theta^-(\mathfrak{h}(Y_L)_\bbQ)) \,.
\end{eqnarray*}
Now, by combining the above decomposition \eqref{eq:iso} with the commutative diagram \eqref{eq:diagram1}, we conclude that $\theta^\pm(\mathfrak{h}(X_L)_\bbQ)$ is isomorphic to the direct sum of $\theta^\pm(\mathfrak{h}(Y_L)_\bbQ)$ with $\oplus_{r=\mathrm{dim}(L)}^{i-1} HP^\pm(U(k)_\bbQ)^{\oplus l_r}$. Since $HP^\pm(U(k)_\bbQ)\simeq (k,0)$, this implies that 
\begin{eqnarray*}
\mathrm{dim}(\theta^+(\mathfrak{h}(X_L)_\bbQ)) & =& \mathrm{dim}(\theta^+(\mathfrak{h}(Y_L)_\bbQ)) + (l_{\mathrm{dim}(L)}+ \cdots + l_{i-1}) \\
\mathrm{dim}(\theta^-(\mathfrak{h}(X_L)_\bbQ)) & = & \mathrm{dim}(\theta^-(\mathfrak{h}(Y_L)_\bbQ))\,.
\end{eqnarray*}
The searched equalities \eqref{eq:equality1}-\eqref{eq:equality1-new} follow now automatically from the combination of the preceding six equalities. This concludes the proof of Theorem \ref{thm:2}.

\medbreak\noindent\textbf{Acknowledgments:} The author is grateful to Joseph Ayoub for an useful e-mail exchange concerning Schur-finiteness in the setting of Voevodsky's mixed motives.

\end{document}

\end{proof}